\newtheorem{theorem}{Theorem}[section]
\theoremstyle{definition}
\newtheorem{conjecture}[theorem]{Conjecture}
\newtheorem{corollary}[theorem]{Corollary}
\newtheorem{lemma}[theorem]{Lemma}
\newtheorem{proposition}[theorem]{Proposition}
\newtheorem{question}[theorem]{Question}
\newtheorem{definition}[theorem]{Definition}
\newtheorem{example}[theorem]{Example}
\title[Orders of products of horizontal class transpositions]
{
Orders of products of horizontal class transpositions}
\author[V.~G.~Bardakov, A.~L.~Iskra]{V.~G.~Bardakov, A.~L.~Iskra}
\address{Sobolev Institute of Mathematics, 4 Acad. Koptyug avenue, 630090, Novosibirsk, Russia.}
\address{Saint Petersburg University, 7/9 Universitetskaya nab., St. Petersburg, 199034 Russia.}
\address{Novosibirsk State Agrarian University, Dobrolyubova street, 160, Novosibirsk, 630039, Russia.}
\address{Regional Scientific and Educational Mathematical Center of Tomsk State University, 36 Lenin Ave., Tomsk, Russia.}
\email{bardakov@math.nsc.ru}
\address{Novosibirsk State  University, 2 Pirogova Street, 630090, Novosibirsk, Russia.}
\address{Saint Petersburg University, 7/9 Universitetskaya nab., St. Petersburg, 199034 Russia.}
\email{a.iskra@g.nsu.ru}
\date{\today}
\begin{document}
\maketitle
\begin{abstract}
The class transposition group
$CT(\mathbb{Z})$ was introduced by S. Kohl in 2010. It is a countable subgroup  of
the permutation group $Sym(\mathbb{Z})$ of the set of integers $\mathbb{Z}$. 
 We study products of two class transpositions $CT(\mathbb{Z})$ and give a partial answer to the question 18.48 posed by S. Kohl in the Kourovka notebook. We prove that in the group $CT_{\infty}$, which is a subgroup of $CT(\mathbb{Z})$ and generated by horizontal class transpositions, the order of the product of a pair of horizontal class transpositions
belongs to the set $\{1,2,3,4,6,12\}$, and any number from this set is the order of the product of a pair of horizontal class transpositions.

\textit{Keywords:} Permutation group,  order of element, class transposition, involution, graph, Collatz problem.

 \textit{Mathematics Subject Classification 2010: 20E07, 20F36, 57K12}
\end{abstract}
\maketitle
\tableofcontents

\section{Introduction}
The problem of describing the permutation groups of a countable set  is one of the central problems of group theory. Just as any finite group can be embedded in some subgroup of the permutation  group of a finite set, any countable group can be embedded in the permutation  group of a countable set, in particular, in the permutation group $Sym(\mathbb{Z})$ of the set of integers $\mathbb{Z}$, which is already uncountable. Therefore, its countable subgroups are more often studied. One such group is the group $CT(\mathbb{Z})$ introduced by S. Kohl's \cite{K}. This group has a number of remarkable properties. In particular, it is simple and contains all finite subgroups. 
The group $CT(\mathbb{Z})$ is generated by transpositions of residue  classes of integers by different modules (briefly class transpositions). S. Kohl wrote  in the Kourovka notebook \cite[question 18.48]{Kour} the problem of describing the orders of products of two class transpositions (generators of $CT(\mathbb{Z})$). He also established the connection between this problem and the famous Collatz~\cite{K1} conjecture.  

S. Kohl's question is closely related to the question about the description of 2--generated subgroups of the group $CT(\mathbb{Z})$.
It is clear that if we consider 2--generated subgroups of  $Sym(\mathbb{Z})$, we cannot expect a complete description of them, in view of the well-known theorem of Higman, B. Neumann and H. Neumann \cite[Chapter IV, \S~3]{LS}, which states that any countable group can be embedded in a group generated by two elements of infinite order. 
The situation changes dramatically if we require that the permutations $t$, $q \in Sym(\mathbb{Z})$ are involutions. In this case, the subgroup generated by $t$ and $q$ is either infinite --- the free product of two cyclic groups of order 2, or a finite dihedral group. 

In this paper we study subgroups of the group $CT(\mathbb{Z})$ generated by two class transpositions. We are interested in the following question: is it true that there are only a finite number of such subgroups up to isomorphism? A positive answer to this question gives the a positive answer to S. Kohl's question. We give a geometric interpretation of any class transposition of $CT(\mathbb{Z})$, show that every such class transposition is either horizontal or oblique, 
for  every pair of class transpositions $\tau_1, \tau_2$ construct  a graph and prove in Theorem 1 that every connected component of this  graph corresponds to  one or two orbits of the product $\tau_1 \cdot \tau_2$.

We introduce a subgroup $CT_{\infty}$ of the group $CT(\mathbb{Z})$, which is  generated by horizontal class transpositions. We prove in  Proposition 3 that this group is isomorphic to  the group $CT_{int}(\mathbb{Z})$ (see \cite{K}), which is  generated by integer class transpositions. We show in Proposition 2  that the study of products of horizontal class transpositions reduces to the study of the product of permutations of some finite set. 
Theorem 2  describes all connected components of the graph, which corresponding to the product of a pair of horizontal class transpositions. From this theorem follows the main 
 result of the paper, which says that the order of the product of a pair of horizontal class transpositions 
belongs to the set $\{1,2,3,4,6,12\}$ and, conversely, for any number in this set there exists a pair of horizontal class transpositions whose product has the order which is  equal to this number. This answers S. Kohl's question for a pair of horizontal class transpositions.

The paper concludes with questions for further research.

In this paper, the set of natural numbers means the set of positive integers.
\bigskip


\section{Preliminaries }
For a pair of natural numbers $r$ and $m$ such that 
 $0 \leq r < m$, denote the class of integers comparable to $r$ modulo $m$ by the symbol $r(m)$, i.e.
$$
r(m) = r + m \mathbb{Z} = \{ r + km~|~k \in \mathbb{Z} \}.
$$
For $r_1(m_1) \cap r_2(m_2) = \emptyset$ define the class transposition $\tau_{r_1(m_1),r_2(m_2)}$ as an involution which interchanges  $r_1+km_1$ and $r_2+km_2$ for each integer $k$ and  fixes everything else. The symbol $CT(\mathbb{Z})$ denotes the group generated by class transpositions (see \cite{K}).

We write the class transposition $\tau_{r_1(m_1), r_2(m_2)}$ as follows
$$
\tau_{r_1(m_1),r_2(m_2)} = \prod_{k \in \mathbb{Z}} (r_1 + m_1 k, r_2 + m_2 k).
$$

In this paper we will use the following geometric interpretation. Let us compare the class transposition $\tau = \tau_{r_1(m_1),r_2(m_2)}$, $r_1 \leq r_2$ the segment in the plane connecting the point $A = (r_1, m_1)$ with the point $B = (r_2, m_2)$. We will call the points $A$ and $B$ {\it vertices} of the class transposition $\tau$. If we want to emphasize that $A$ and $B$ are vertices of the class transposition $\tau$, we write $A = A(\tau)$ and $B = B(\tau)$.
With this interpretation we introduce

\begin{definition}
A permutation $\tau_{r_1(m_1), r_2(m_2)}$ is called {\it vertical} if $r_1 = r_2$.  A permutation $\tau_{r_1(m_1), r_2(m_2)}$ is called {\it horizontal} if $m_1 = m_2$. If  a permutation is neither vertical nor horizontal, it is called {\it oblique}.
\end{definition}

\begin{lemma}\label{LD}
A permutation $\tau_{r_1(m_1), r_2(m_2)}$ is a class transposition if and only if gcd($m_1, m_2$) does not divide $|r_1 - r_2|$.
In particular, a vertical permutation is not a class transposition.
\end{lemma}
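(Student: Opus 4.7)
The plan is to translate the emptiness condition $r_1(m_1) \cap r_2(m_2) = \emptyset$ appearing in the definition of a class transposition directly into the claimed arithmetic condition on $|r_1 - r_2|$ and $\gcd(m_1,m_2)$, using only the standard solvability criterion for linear Diophantine equations.

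First I would unfold the definitions. An element of $r_1(m_1) \cap r_2(m_2)$ is an integer expressible as both $r_1 + m_1 k_1$ and $r_2 + m_2 k_2$ for some $k_1, k_2 \in \mathbb{Z}$. Hence $r_1(m_1) \cap r_2(m_2) \neq \emptyset$ is equivalent to the solvability, in integer unknowns $x,y$, of the equation $m_1 x - m_2 y = r_2 - r_1$.

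Next I would invoke the classical Bezout-type criterion: this linear Diophantine equation has an integer solution if and only if $\gcd(m_1, m_2)$ divides $r_2 - r_1$, equivalently divides $|r_1 - r_2|$. Taking contrapositives, $r_1(m_1)\cap r_2(m_2)=\emptyset$ is equivalent to $\gcd(m_1,m_2)\nmid |r_1-r_2|$, which, together with the definition of a class transposition, is exactly the stated equivalence.

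For the final ``in particular'' assertion, note that if $\tau_{r_1(m_1), r_2(m_2)}$ is vertical then $r_1 = r_2$, so $|r_1 - r_2| = 0$; since every positive integer divides $0$, in particular $\gcd(m_1, m_2) \mid 0$, so by the equivalence just proved $\tau$ is not a class transposition (alternatively, the intersection visibly contains the integer $r_1 = r_2$). No serious obstacle is expected: the lemma is essentially a direct reformulation of the solvability criterion for $m_1 x - m_2 y = r_2 - r_1$, and the only care needed is the harmless switch between $r_2 - r_1$ and $|r_1 - r_2|$ under divisibility.
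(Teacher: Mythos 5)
Your proposal is correct and follows essentially the same route as the paper: unfold the emptiness condition into the linear Diophantine equation $m_1x - m_2y = r_2 - r_1$ and apply the standard solvability criterion via $\gcd(m_1,m_2)$. You are merely more explicit than the paper in naming the Bezout criterion and in spelling out the vertical case, which the paper leaves implicit.
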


\begin{proof}
By definition, a permutation $\tau_{r_1(m_1), r_2(m_2)}$ is a class transposition if $r_1(m_1) \cap r_2(m_2) = \emptyset$. This means that the Diophantine equation $r_1 + m_1 k = r_2 + m_2 l$ is not solvable for any integers $k$ and $l$. Since this equation is equivalent to the equation $r_1 - r_2 = m_2 l - m_1 k$, we get the desired statement.
\end{proof}

The following question can be found in the Kourovka  notebook \cite[Problem 18.48]{Kour}.

\begin{question}
Is it true that there are only finitely many integers which occur  as orders of  products of two class transpositions?
\end{question}

We denote by $CT_n$ a subgroup of $CT(\mathbb{Z})$ which is generated by horizontal class transpositions $\tau_{r_1(n),r_2(n)}$, $0 \leq r_1 \not= r_2 < n$. Obviously $\tau_{r_1(n),r_2(n)}$ permutes the residue classes modulo  $n$, and therefore $CT_n$ is isomorphic to the permutation  group $S_n$.

S. Kohl \cite{K} defined a monomorphism
$$
\varphi_m \colon S_m \to CT(\mathbb{Z})
$$
by the formula
$$
\sigma \mapsto (\sigma^{\varphi_m} \colon n \mapsto n+(n~mod(m))^{\sigma}) - (n~mod(m)),
$$
where we assume  that $S_m$ acts on the set $\{ 0, 1, \ldots, m-1 \}$.

\bigskip

\section{Some properties of the group $CT(\mathbb{Z})$}
In \cite{K} was found the equality 
\begin{equation*} \label{Kol}
\tau_{0(2),1(2)} = \tau_{0(4),1(4)} \cdot \tau_{2(4),3(4)} \colon n \mapsto n + (-1)^n.
\end{equation*}

It is natural to find  the products 
$$
\tau_{0(6),1(6)} \cdot \tau_{2(6),3(6)} \cdot \tau_{4(6),5(6)},~~~\tau_{0(8),1(8)} \cdot \tau_{2(8),3(8)}. \cdot \tau_{4(8),5(8)}.  \cdot \tau_{6(8),7(8)}, \ldots 
$$

The following proposition answers this question.

\begin{proposition}\label{l}
Let $\tau_{r_1(m_1),r_2(m_2)}$ be a class transposition. Then

1) the permutation $\tau_{km_1+r_1(nm_1),km_2+r_2(nm_2)}$ is a class transposition for any natural $k$ and $n$ such that k $\leq n-1$;

2) for any natural $n$ the following equality holds
$$
\tau_{r_1(m_1),r_2(m_2)}=\prod\limits_{k=0}^{n-1}\tau_{km_1+r_1(nm_1),km_2+r_2(nm_2)}.
$$
\end{proposition}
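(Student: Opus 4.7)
The plan is to deduce part (1) directly from Lemma~\ref{LD} and to deduce part (2) from the observation that each residue class $r_i(m_i)$ splits into $n$ disjoint sub-classes modulo $nm_i$, after which the $n$ proposed factors turn out to have pairwise disjoint supports.

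For (1), set $d = \gcd(m_1, m_2)$, so that by Lemma~\ref{LD} the hypothesis ``$\tau_{r_1(m_1),r_2(m_2)}$ is a class transposition'' reads $d \nmid (r_1 - r_2)$. The proposed permutation $\tau_{km_1+r_1(nm_1),km_2+r_2(nm_2)}$ has associated gcd $\gcd(nm_1, nm_2) = nd$ and difference $(km_1+r_1) - (km_2+r_2) = k(m_1-m_2) + (r_1-r_2)$, so by Lemma~\ref{LD} again it suffices to check that $nd$ does not divide this difference. If it did, then a fortiori $d$ would divide it; but $d \mid k(m_1-m_2)$ trivially, so $d$ would divide $r_1 - r_2$, contradicting the hypothesis. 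This establishes (1).

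For (2), write every $\ell \in \mathbb{Z}$ uniquely as $\ell = k + n j$ with $k \in \{0, 1, \ldots, n-1\}$ and $j \in \mathbb{Z}$. Substituting into $r_i + m_i \ell$ gives, for $i = 1, 2$, the disjoint decomposition
\[
r_i(m_i) \;=\; \bigsqcup_{k=0}^{n-1} (k m_i + r_i)(n m_i).
\]
Under $\tau := \tau_{r_1(m_1),r_2(m_2)}$ the element $r_1 + m_1(k + n j)$ goes to $r_2 + m_2(k + n j)$, i.e.\ the $k$-th sub-class of $r_1(m_1)$ is swapped with the $k$-th sub-class of $r_2(m_2)$ precisely as $\tau_{km_1+r_1(nm_1),km_2+r_2(nm_2)}$ does (and this latter map is a class transposition by (1)).

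Since distinct values of $k$ index disjoint sub-classes inside both $r_1(m_1)$ and $r_2(m_2)$, the $n$ factors appearing in the proposed product have pairwise disjoint supports. They therefore commute, each factor fixes the complement of its support, and together they realise exactly the swaps performed by $\tau$ while fixing everything outside $r_1(m_1) \cup r_2(m_2)$. This yields the claimed factorization. There is no essential obstacle in the argument; the only point requiring care is the bijection $\ell \leftrightarrow (k, j)$ together with the verification that the $n$ pieces are genuinely disjoint.
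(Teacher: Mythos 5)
Your proof is correct. For part (2) it is essentially the paper's argument in different clothing: the paper reindexes the formal product $\prod_{c\in\mathbb{Z}}$ by writing $c = nl + k$, while you perform the same decomposition $\ell = k + nj$ at the level of sets, splitting each class $r_i(m_i)$ into $n$ disjoint sub-classes modulo $nm_i$ and checking that the factors have pairwise disjoint supports; the set-theoretic phrasing makes the disjointness (and hence the commutativity of the factors) explicit, which the paper leaves implicit in its manipulation of infinite products. For part (1) you take a genuinely different, though equally short, route: the paper argues directly from the definition, showing that a common element of the two sub-classes would yield a common element of $r_1(m_1)$ and $r_2(m_2)$, whereas you invoke the gcd criterion of Lemma~\ref{LD}, observing that $\gcd(nm_1,nm_2)=n\gcd(m_1,m_2)$ and that $\gcd(m_1,m_2)$ already fails to divide the shifted difference because it divides $k(m_1-m_2)$. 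Both arguments are sound; the paper's is marginally more self-contained (it does not rely on Lemma~\ref{LD}), while yours reuses machinery already established.
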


\begin{proof}
1) Assume the opposite, i.e.  
$$
km_1+r_1(nm_1)\cap km_2+r_2(nm_2)\neq\emptyset.
$$
 Then there exist such integers $k_1$ and $k_2$ that the following equality holds
$$
km_1+r_1+nm_1k_1=km_2+r_2+nm_2k_2\Leftrightarrow r_1+m_1(k+nm_1k_1)=r_2+m_2(k+nk_2),
$$
but this means that $r_1(m_1)\cap r_2(m_2)\neq\emptyset$. This contradicts the fact that $\tau_{r_1(m_1),r_2(m_2)}$ is a  class transposition.

2) The proof follows from the  chain of equations: 
\begin{eqnarray*}
\tau_{r_1(m_1),r_2(m_2)} &=& \prod\limits_{c \in \mathbb{Z}} (r_1+m_1c,r_2+m_2c)\\
&=&\prod\limits_{s \in n\mathbb{Z}}\prod\limits_{k=0}^{n-1}(r_1+m_1(s+k),r_2+m_2(s+k))\\
&=&\prod\limits_{l \in \mathbb{Z}}\prod\limits_{k=0}^{n-1}(r_1+m_1(nl+k),r_2+m_2(nl+k))\\
&=&\prod\limits_{k=0}^{n-1}\prod\limits_{l \in \mathbb{Z}}(r_1+m_1k+m_1nl,r_2+m_2k+m_2nl))\\
&=&\prod\limits_{k=0}^{n-1}\tau_{km_1+r_1(nm_1),km_2+r_2(nm_2)}.
\end{eqnarray*}
By (1), every permutation $\tau_{km_1+r_1(nm_1),km_2+r_2(nm_2)}$ is a class transposition.
\end{proof}

We denote the subgroup that is generated by all of the horizontal class transpositions by $CT_{\infty}$.

\begin{proposition}\label{p2}
Let $\tau_{r_i(n_i),\tilde{r}_{i}(n_i)}$, where $i=1, 2, \ldots, m$ be  $m$  horizontal class transpositions. Then the order of their product can be found by the formula
$$
|\prod_{i=1}^m\tau_{r_i(n_i),\tilde{r}_{i}(n_i)}|=|\prod_{i=1}^m\prod_{k=0}^{p_i-1}(n_ik+r_i, n_ik+\tilde{r}_{i})|,
$$   
where $p_i=\frac{lcm(n_1, \ldots, n_m)}{n_i}$.
\end{proposition}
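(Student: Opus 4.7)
The plan is to reduce the computation to a single finite symmetric group by first promoting every horizontal class transposition to a common modulus. Set $N = \mathrm{lcm}(n_1, \dots, n_m)$, so $p_i = N / n_i$. By part (2) of Proposition \ref{l} applied to each $\tau_{r_i(n_i), \tilde{r}_i(n_i)}$ with $n = p_i$, we can rewrite
$$
\tau_{r_i(n_i), \tilde{r}_i(n_i)} = \prod_{k=0}^{p_i - 1} \tau_{n_i k + r_i(N),\, n_i k + \tilde{r}_i(N)},
$$
so the full product $\prod_i \tau_{r_i(n_i), \tilde{r}_i(n_i)}$ becomes a product of horizontal class transpositions all having the common modulus $N$. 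In particular, this product lies in the subgroup $CT_N \leq CT_\infty$.

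Next, I would use the identification $CT_N \cong S_N$ recorded just after Question 2.3: a horizontal class transposition $\tau_{a(N),b(N)}$ acts on $\mathbb{Z}$ by permuting residue classes modulo $N$, and this action coincides with the transposition $(a,b)$ on the set $\{0, 1, \dots, N-1\}$ of residues. Since $r_i, \tilde{r}_i < n_i$ and $0 \leq k \leq p_i - 1$, we have $0 \leq n_i k + r_i, n_i k + \tilde{r}_i \leq n_i(p_i - 1) + (n_i - 1) = N - 1$, so the labels already lie in $\{0, 1, \ldots, N-1\}$. The isomorphism $CT_N \to S_N$ is a group isomorphism, hence preserves orders, and sends each factor $\tau_{n_i k + r_i(N),\, n_i k + \tilde{r}_i(N)}$ to the transposition $(n_i k + r_i, n_i k + \tilde{r}_i)$.

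Combining these two steps yields
$$
\Bigl|\prod_{i=1}^m \tau_{r_i(n_i),\tilde{r}_i(n_i)}\Bigr| = \Bigl|\prod_{i=1}^m \prod_{k=0}^{p_i - 1} \tau_{n_i k + r_i(N),\, n_i k + \tilde{r}_i(N)}\Bigr| = \Bigl|\prod_{i=1}^m \prod_{k=0}^{p_i - 1} (n_i k + r_i,\ n_i k + \tilde{r}_i)\Bigr|,
$$
which is the asserted formula. The only real content is the expansion via Proposition \ref{l} and the order-preservation under the identification $CT_N \cong S_N$; no genuine obstacle arises, since the residues appearing in the labels fit into $\{0, 1, \ldots, N-1\}$ automatically and the class transpositions with common modulus $N$ just permute those $N$ residue classes.
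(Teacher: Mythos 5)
Your proof is correct and follows essentially the same route as the paper: expand each factor to the common modulus $N=\mathrm{lcm}(n_1,\dots,n_m)$ via Proposition \ref{l}(2), then transport the resulting product through the order-preserving isomorphism $CT_N\cong S_N$ sending $\tau_{u_1(N),u_2(N)}$ to $(u_1,u_2)$. Your extra check that the labels land in $\{0,\dots,N-1\}$ is a harmless refinement of the same argument.
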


\begin{proof}
By Proposition \ref{l}, we have
$$
\prod_{i=1}^m\tau_{r_i(n_i),\tilde{r}_{i}(n_i)}=\prod_{i=1}^m\prod_{k=0}^{p_i-1}\tau_{n_ik+r_i(N),n_ik+\tilde{r}_{i}(N)},
$$
where $N = lcm(n_1, \ldots, n_m)$.

We assume that the permutation group $S_{N}$ acts on the set of  the residue classes modulo  $N$, i.e. on the set $\{ 0, 1, \ldots, N-1 \}$. Then the mapping 
$CT_{N} \to S_{N}$ which is given by the correspondence 
$$
\tau_{u_1(N),u_2(N)}\mapsto (u_1,u_2)
$$
is an isomorphism. Under the action of this isomorphism, the element $\prod_{i=1}^m\tau_{r_i(n_i),\tilde{r}_{i}(n_i)}$ goes to a permutation
$$
\prod_{i=1}^m\prod_{k=0}^{p_i-1}(n_ik+r_i, n_ik+\tilde{r}_{i}).
$$
\end{proof}

This proposition  reduces the study of the product of horizontal class transpositions to the study of the product of permutations of a finite set. 

\begin{example}
1) Consider the product of the class transpositions  $\tau_{0(2),1(2)}$ and $\tau_{0(3),1(3)}$. Using Proposition \ref{p2} we get 
\begin{eqnarray*}
|\tau_{0(2),1(2)}\cdot\tau_{0(3),1(3)}|&=&|(0, 1)(2, 3)(4, 5) \cdot (0, 1)(3, 4)|\\
&=&|(2, 3)(4, 5) \cdot (3, 4)|\\
&=&|(2, 4, 5, 3)|\\
&=&4,
\end{eqnarray*}
i.e. the order of the product $\tau_{0(2),1(2)}\cdot\tau_{0(3),1(3)}$ is equal to the order of the product of the permutation $(0, 1)(2, 3)(4, 5)$ on the permutation $(0, 1)(3, 4)$ in the group $S_6$.
Thus, class transpositions  $\tau_{0(2),1(2)}$ and $\tau_{0(3),1(3)}$ generate the dihedral group of order 8.
Direct computations give
$$
\tau_{0(2),1(2)}\cdot\tau_{0(3),1(3)} = \prod_{s \in \mathbb{Z}} (6 s) (1 + 6 s) (2 + 6 s, 4 + 6 s, 5 + 6 s, 3 + 6 s).
$$

2) Consider the product of the class transpositions $\tau_1 = \tau_{0(3),1(3)}$ and $\tau_2 = \tau_{2(4),3(4)}$. Using Proposition \ref{p2} we get
\begin{eqnarray*}
|\tau_{0(3),1(3)}\cdot\tau_{2(4),3(4)}|&=&|(0, 1)(3, 4)(6, 7)(9, 10) \cdot (2, 3)(6, 7)(10, 11)|\\
&=&|(0, 1)(3, 4)(9, 10) \cdot (2, 3)(10, 11)|\\
&=&|(0, 1)(2, 3, 4)(9, 11, 10)|\\
&=&6,
\end{eqnarray*}
i.e. the order of the product $\tau_{0(3),1(3)}\cdot\tau_{2(4),3(4)}$ is equal to the order of the product of the permutation $(0, 1)(3, 4)(6, 7)(9, 10)$ on the permutation $(2, 3)(6, 7)(10, 11)$ in the group $S_{12}$.
Thus the group $\langle \tau_{0(3),1(3)}, \tau_{2(4),3(4)}\rangle$ is isomorphic to the dihedral group of order 12. It can be shown that the following equality holds:
$$
\tau_1 \tau_2 = \prod_{s \in \mathbb{Z}} (12 s, 1+12s)(2 + 12s, 3+12s, 4+12s) (5+12s) (6+12s) (7+12s) (8+12s) (9+12s, 11+12s, 10+12s).
$$
\end{example}

Let us show that the group $CT_{\infty}$ coincides with the group $CT_{int}(\mathbb{Z})$ introduced in~\cite{K}. For this purpose, we recall some definitions from \cite{K}.
 The mapping $f \colon \mathbb{Z} \to \mathbb{Z}$ {\it is called affine on the set of residue classes} if there exists a positive integer $m$ such that the restriction of $f$ to all residue classes $r(m) \in \mathbb{Z} / m \mathbb{Z}$ is affine, i.e., defined as follows
$$
f|_{r(m)} \colon r(m) \to \mathbb{Z},~~n \mapsto (a_{r(m)} \cdot n + b_{r(m)}) / c_{r(m)},
$$
for some coefficients $a_{r(m)}$, $b_{r(m)}$,  $c_{r(m)} \in \mathbb{Z}$, depending on $r(m)$. The least possible $m$ is called the {\it modulus} of $f$ and is denoted by $Mod(f)$. We will assume that the greatest common divisor $gcd(a_{r(m)}, b_{r(m)}, c_{r(m)})$ is 1 and $c_{r(m)} > 0$. The {\it multiplier} of $f$ is called the least common multiple:
$$
lcm\{ a_{r(m)}~|~r(m) \in \mathbb{Z} / m \mathbb{Z} \},
$$
{\it the divisor} of $f$ is called
$$
lcm\{ c_{r(m)}~|~r(m) \in \mathbb{Z} / m \mathbb{Z} \}.
$$
A mapping $f$ is called {\it is integral} if its divisor is 1.  The  group which is  generated by integral class transpositions is denoted by $CT_{int}(\mathbb{Z})$.

Now we are ready to prove

\begin{proposition}\label{p3}
The group $CT_{\infty}$ coincides with the group $CT_{int}(\mathbb{Z})$.
\end{proposition}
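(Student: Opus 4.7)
The plan is to prove both inclusions $CT_\infty \subseteq CT_{int}(\mathbb{Z})$ and $CT_{int}(\mathbb{Z}) \subseteq CT_\infty$ by showing that a class transposition $\tau_{r_1(m_1),r_2(m_2)}$ is integral if and only if it is horizontal, i.e. $m_1 = m_2$. Since the generating sets of the two groups then coincide, equality follows.

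For the easy inclusion, fix a horizontal class transposition $\tau = \tau_{r_1(m), r_2(m)}$ and describe it using the modulus $m$: on $r_1(m)$ it acts as $n \mapsto n + (r_2 - r_1)$, on $r_2(m)$ it acts as $n \mapsto n - (r_2 - r_1)$, and on every other residue class mod $m$ it acts as the identity. Each of these affine maps has denominator $c_{r(m)} = 1$, so the divisor of $\tau$ equals $1$ and $\tau \in CT_{int}(\mathbb{Z})$.

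For the converse, let $\tau = \tau_{r_1(m_1), r_2(m_2)}$ be an arbitrary class transposition. I would first observe that any modulus $m$ witnessing an ``affine on residue classes'' description of $\tau$ must simultaneously refine $r_1(m_1)$ and $r_2(m_2)$, hence must be a common multiple of $m_1$ and $m_2$; in particular one may work with $M = \mathrm{lcm}(m_1, m_2)$, and the divisor is invariant under refining the modulus (subdividing a class replicates the same normalized triple $(a,b,c)$). On each sub-class of $r_1(m_1)$ modulo $M$ the map $\tau$ sends $n$ to $r_2 + m_2(n - r_1)/m_1$, which I would rewrite as
\[
n \;\mapsto\; \frac{m_2\, n + (m_1 r_2 - m_2 r_1)}{m_1}.
\]
The key observation is that $\gcd(m_1, m_2)$ divides $m_1 r_2 - m_2 r_1$, so $\gcd(m_2,\, m_1 r_2 - m_2 r_1,\, m_1) = \gcd(m_1, m_2)$; after reduction the denominator becomes $m_1/\gcd(m_1, m_2)$. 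The symmetric analysis on sub-classes of $r_2(m_2)$ gives denominator $m_2/\gcd(m_1, m_2)$, and on the fixed classes the denominator is $1$. Hence
\[
\mathrm{divisor}(\tau) \;=\; \mathrm{lcm}\!\left( \frac{m_1}{\gcd(m_1, m_2)},\; \frac{m_2}{\gcd(m_1, m_2)} \right) \;=\; \frac{\mathrm{lcm}(m_1, m_2)}{\gcd(m_1, m_2)},
\]
which equals $1$ exactly when $m_1 = m_2$. Thus every integral class transposition is horizontal.

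The main point to justify carefully is that the divisor may be computed at a refined modulus $M$ rather than at the minimal $\mathrm{Mod}(\tau)$; this follows because a normalized affine triple $(a,b,c)$ with $\gcd(a,b,c)=1$ remains normalized on any sub-class. The rest is a short manipulation of gcd and lcm, and there is no genuine obstacle beyond bookkeeping.
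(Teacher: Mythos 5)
Your proof is correct and follows essentially the same route as the paper: both inclusions are obtained by showing that a class transposition is integral precisely when it is horizontal, using the explicit affine description of $\tau_{r_1(m_1),r_2(m_2)}$ on the two residue classes it moves. If anything, your treatment of the converse is more careful than the paper's, which takes the denominators to be $m_1$ and $m_2$ and concludes $m_1=m_2$ without carrying out the $\gcd$-normalization of the coefficient triples that you make explicit.
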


\begin{proof}
Let $\tau_{r_1(m),r_2(m)}$ be a horizontal class transposition. Then 
\begin{equation*}
\tau_{r_1(m),r_2(m)}=
\begin{cases}
n+r_2-r_1,&\text{$n\in r_1(m)$},\\
n+r_1-r_2,&\text{$n\in r_2(m)$},
\end{cases}
\end{equation*}
i.e., $\tau_{r_1(m),r_2(m)}$ is an integral class transposition. 

Let $\tau_{r_1(m_1),r_2(m_2)}$ be an integral class transposition. Then 
\begin{equation*}
\tau_{r_1(m_1),r_2(m_1)}=
\begin{cases}
\frac{(n+r_2-r_1)m_2}{m_1},&\text{$n\in r_1(m_1)$},\\
\frac{(n+r_1-r_2)m_1}{m_2},&\text{$n\in r_2(m_2)$}.
\end{cases}
\end{equation*}
Since the class transposition $\tau_{r_1(m_1),r_2(m_2)}$ is an integral, by definition 
$$
lcm(c_{r_1(m_1)},c_{r_2(m_2)})=1.
$$
Hence $c_{r_1(m_1)}=c_{r_2(m_2)}=1$. It follows that $m_1=m_2$, i.e., the class transposition $\tau_{r_1(m_1),r_2(m_2)}$ is horizontal.

Hence, $CT_{\infty} = CT_{int}(\mathbb{Z})$.
\end{proof}

\bigskip


\section{Graph of the product of two class transpositions}\label{Graph}

Consider a pair of class transpositions
$$
\tau_1 = \tau_{r_1(m_1),r_2(m_2)} = \prod\limits_{k \in \mathbb{Z}} (r_1+m_1k, r_2+m_2k),~~~~\tau_2 = \tau_{r_3(m_3),r_4(m_4)} = \prod\limits_{l \in \mathbb{Z}} (r_3+m_3l , r_4+m_4 l)
$$
and a pair of non-intersecting sets
$$
V_1 = \{ a_k, b_k ~| ~ k \in \mathbb{Z}\},~~~V_2 = \{ c_l, d_l ~| ~ l \in \mathbb{Z}\}.
$$
Let us define a graph $\Gamma(\tau_1,\tau_2)=(V,E)$ whose vertex set $V=V_1\sqcup V_2$ is an independent union of the sets $V_1$ and $V_2$. Let us define a mapping
 $$
 \mu \colon V \to Supp(\tau_1) \cup Supp(\tau_2) \subseteq \mathbb{Z},
 $$
such that 
$$
\mu(a_k) = r_1+m_1k,~~\mu(b_k) = r_2+m_2k,~~\mu(c_l) = r_3+m_3 l,~~\mu(d_l) = r_4+m_4 l.
$$
Obviously, the restriction of $\mu$ to each $V_i$, $i = 1, 2$, is a bijection.

The set of undirected edges $E$ consists of pairs $\{v_i, v_j \}\in E$ for which one of the following three cases holds

1) $ \mu(v_i) = \mu(v_j)$ and $v_i\in V_1, v_j\in V_2$ or vice versa $v_i\in V_2, v_j\in V_1$,

2) $\tau_1(\mu(v_i))= \mu(v_j)$ and $v_i,v_j\in V_1$,

3) $\tau_2(\mu(v_i))= \mu(v_j)$ and $v_i,v_j\in V_2$.

 The edges that satisfy condition 1) are called {\it edges of the first type}. The edges satisfying conditions 2) or 3) are called {\it edges of the second type}.
If $v$ is a vertex of the graph $\Gamma(\tau_1,\tau_2)$, then the symbol $d(v)$ will denote the number of vertices adjacent to $v$.

Also, to simplify the notation, we will write $\tau_i(u) = v$ instead of $\tau_i(\mu(u)) = \mu(v)$, $u, v \in V$, $i = 1, 2$.

\begin{lemma}\label{l2}
For any vertex $v$ of the graph $\Gamma(\tau_1,\tau_2)$ one of the following two statements holds:

1) vertex $v$ has valence 1 and, in this case, $v$ is incident to an edge of the second type;

2) vertex $v$ has valence 2 and, in this case, $v$ is incident to an edge of the first type and to an edge of the second type.
\end{lemma}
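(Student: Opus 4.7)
The plan is to analyze the incidences at an arbitrary vertex $v$ by going through the three defining conditions of the edge set one by one. By the symmetric roles of $V_1$ and $V_2$ in the construction, it suffices to consider $v \in V_1$, so that $\mu(v) \in r_1(m_1) \sqcup r_2(m_2) = Supp(\tau_1)$. The key structural input is already observed in the text: each restriction $\mu|_{V_i}$ is a bijection onto $Supp(\tau_i)$. For $V_1$ this is because the two residue classes $r_1(m_1)$ and $r_2(m_2)$ are disjoint (since $\tau_1$ is a class transposition), and analogously for $V_2$.

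With this in hand, I would count the edges at $v$ by type. A second-type edge at $v \in V_1$ can only come from condition 2), since condition 3) requires both endpoints in $V_2$; such an edge is a pair $\{v,v'\}$ with $v' \in V_1$ and $\tau_1(\mu(v)) = \mu(v')$. Since $\tau_1(\mu(v)) \in Supp(\tau_1)$ and $\mu|_{V_1}$ is a bijection onto $Supp(\tau_1)$, the vertex $v'$ exists and is uniquely determined; moreover $v' \neq v$ because $\tau_1$ has no fixed points on its support, so no loop arises. Concretely for $v = a_k$ one gets $v' = b_k$, and for $v = b_k$ one gets $v' = a_k$. Hence $v$ is incident to exactly one edge of the second type.

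A first-type edge at $v$ comes from condition 1) and is a pair $\{v,v'\}$ with $v' \in V_2$ and $\mu(v') = \mu(v)$. Because $\mu|_{V_2}$ is a bijection onto $Supp(\tau_2)$, there is exactly one such $v'$ when $\mu(v) \in Supp(\tau_2)$, and none otherwise. Combining, $d(v) = 1$ precisely when $\mu(v) \notin Supp(\tau_2)$ (only the second-type edge is present, giving case 1) of the lemma), and $d(v) = 2$ precisely when $\mu(v) \in Supp(\tau_2)$ (one edge of each type, giving case 2)). The case $v \in V_2$ is handled symmetrically with $\tau_1 \leftrightarrow \tau_2$ and $V_1 \leftrightarrow V_2$.

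I do not foresee a genuine obstacle here. The only things that could in principle spoil the valence count are loops or parallel edges, and both are ruled out by the bijectivity of $\mu|_{V_i}$ together with the fixed-point-freeness of $\tau_i$ on $Supp(\tau_i)$. The one point worth being careful about is to verify that the image of $\mu|_{V_i}$ really coincides with $Supp(\tau_i)$, i.e., that neither the $a_k$--$b_k$ families nor the $c_l$--$d_l$ families collide between themselves, which is exactly the defining disjointness condition of a class transposition.
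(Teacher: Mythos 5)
Your proof is correct and follows essentially the same route as the paper's: fix $v\in V_1$ by symmetry, use the bijectivity of $\mu|_{V_i}$ onto $Supp(\tau_i)$ to get exactly one edge of the second type at $v$, and then observe that a first-type edge exists (uniquely) precisely when $\mu(v)\in Supp(\tau_2)$. Your explicit remarks ruling out loops and parallel edges are a slight elaboration of what the paper leaves implicit, but the argument is the same.
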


\begin{proof}
Let us assume that $v\in V_1$. The dual case is analyzed in the same way. Since $\mu(v) \in Supp(\tau_1)$, $\tau_1$ contains a transpose $(\mu(v), \mu(u))$ for some vertex $u \in V_1$. Hence, $v$ is incident to an edge of the second type. Obviously, such an edge is unique. If $\mu(v) \not \in Supp(\tau_2)$, then $v$ has valence 1 and we obtain statement 1).  If $\mu(v) \in Supp(\tau_2)$, then there is a unique vertex $w \in V_2$ such that $\mu(w) = \mu(v)$. In this case $v$ has valence 2 and we obtain statement 2). Since there are no other possibilities, the lemma is proved.
\end{proof}

Using this lemma it is easy to classify the connected components of the graph $\Gamma(\tau_1,\tau_2)$. If a connected component is finite, i.e., it contains a finite number of vertices and all vertices have valence 2, then we choose a vertex and denote it by $v_1$. Then this connected component is given by the sequence 
$$
v_1 v_2\dots v_{n-1}v_1,
$$
in which a pair of neighbouring vertices are connected by an edge. Then, by specifying the connected components by a sequence of vertices, we consider that neighbouring vertices are connected by an edge. Connected components of this kind will be called connected components of the  {\it first type}. If there are vertices of valence 1 in a finite connected component, it is easy to see that there are exactly two of them. We denote one of them by $v_1$ and the other by $v_n$ such that the inequality $\mu(v_1)<\mu(v_n)$ holds. Then this connected component can be written as 
$$
v_1v_2\dots v_n.
$$
Connected components of this kind will be called connected components  of the {\it  second type}. If all vertices of a connected component have valence 2 and their number is infinite, then we denote by $v_0$ any vertex of this component. We will write this connected component in the form 
$$
\prod_{i\in\mathbb{Z}}v_i.
$$
We will call such connected components  of the {\it  third type}. If a connected component contains infinitely many vertices and a vertex of valence 1 is found, then it is the only one. We denote it by $v_1$ and write this connected component as
$$
\prod_{i=1}^{\infty}v_i.
$$
Connected components of this kind will be called connected components of the {\it fourth type}.

The following theorem establishes the relationship between the connected components of a graph and the orbits of the product of a pair of class transpositions.

\begin{theorem}\label{t1}
There exists an injection $\psi$ of the set of connected components $\{S_i\}_{i\in I}$ of the graph $\Gamma(\tau_1,\tau_2)$ into permutations belonging to the product $\tau_1\cdot \tau_2$ such that $Supp(\psi(S_i))\cap Supp(\psi(S_j))=\emptyset$ if $i\neq j$. In this case. 
$$
\prod_{i\in I}\psi(S_i)=\tau_1\cdot\tau_2.
$$
\end{theorem}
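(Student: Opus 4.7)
The plan is to construct $\psi$ by traversing each connected component of $\Gamma(\tau_1,\tau_2)$. First, from Lemma \ref{l2}, along any walk in the graph consecutive edges must alternate types, since each vertex has at most one edge of each type. Combined with the rule that type-$2$ edges stay inside a single $V_i$ while type-$1$ edges cross between $V_1$ and $V_2$, this forces the membership pattern along a walk to follow $V_1,V_1,V_2,V_2,V_1,V_1,\ldots$; in particular every finite cyclic component has length divisible by $4$.

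I would then read off $\mu$ along a walk. Since $\mu$ is constant across a type-$1$ edge and changes by $\tau_1$ or $\tau_2$ across a type-$2$ edge inside $V_1$ or $V_2$ respectively, collecting the distinct values produces a sequence $\ldots, y_{-1}, y_0, y_1, y_2, \ldots$ whose consecutive terms are related by $\tau_1$ and $\tau_2$ alternately. Two steps in this sequence therefore correspond to a single application of $\tau_2\tau_1$ (or its inverse), and I would define $\psi(S)$ to be the permutation $y_i\mapsto y_{i+2}$ on the set of distinct $\mu$-values of $S$. For a cyclic component of length $4k$ this yields either a single $2k$-cycle or a product of two $k$-cycles, depending on whether the even- and odd-indexed subsequences coincide as sets (this is the source of the ``one or two orbits'' dichotomy foreshadowed in the introduction). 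For path components the valence-$1$ endpoints force $\tau_1$ or $\tau_2$ to fix the corresponding $y$-values, merging the two subsequences into a single finite cycle. Infinite components produce infinite cycles analogously.

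The remaining verifications are: \emph{disjointness} of the supports, because if $\mu(v)=\mu(v')$ with $v,v'$ in different components then $\mu$ being injective on each $V_i$ forces one of $v,v'$ into $V_1$ and the other into $V_2$, giving a type-$1$ edge between them and placing them in the same component, a contradiction; \emph{coverage}, because every $x$ moved by $\tau_1\tau_2$ lies in $Supp(\tau_1)\cup Supp(\tau_2)$ and hence is the $\mu$-image of some vertex; and \emph{agreement}, because by construction $\tau_1\tau_2$ restricted to $\mu(S_i)$ coincides with $\psi(S_i)$. Injectivity of $\psi$ follows from disjointness, and the product identity $\prod\psi(S_i)=\tau_1\tau_2$ from coverage and agreement.

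The main obstacle I expect is pinning down the cycle structure of $\psi(S)$ on cyclic components: one must track whether the walk closing up after $4k$ steps returns $y_0$ to itself after $k$ or $2k$ applications of $\tau_2\tau_1$, which amounts to deciding when the even- and odd-indexed $y$-subsequences coincide. A secondary subtlety is the proper handling of valence-$1$ endpoints in path components (types two and four), where the ``walk two steps'' recipe must be adjusted at the boundary because $\tau_1$ or $\tau_2$ acts trivially on the endpoint's $\mu$-value.
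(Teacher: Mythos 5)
Your proposal follows essentially the same route as the paper: traverse each connected component using the alternation of first- and second-type edges guaranteed by Lemma \ref{l2}, read off the induced permutation on the $\mu$-values, and verify disjointness of supports and coverage (your disjointness argument via injectivity of $\mu$ on each $V_i$ is in fact cleaner than the paper's). Two points need tightening, however. First, the map $y_i\mapsto y_{i+2}$ is \emph{not} the restriction of $\tau_1\cdot\tau_2$: along the walk $y_i\xrightarrow{\tau_1}y_{i+1}\xrightarrow{\tau_2}y_{i+2}\xrightarrow{\tau_1}\cdots$ the product sends $y_i$ to $y_{i+2}$ only for $i$ of one parity, while for the other parity $\tau_1(y_i)=y_{i-1}$ and hence $y_i\mapsto y_{i-2}$. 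As literally defined, your $\psi(S)$ therefore differs from $\tau_1\cdot\tau_2$ on $\mu(S)$ by inverting one of the two cycles, and the claimed identity $\prod_i\psi(S_i)=\tau_1\cdot\tau_2$ would fail; the fix is simply to define $\psi(S)$ as the restriction of $\tau_1\cdot\tau_2$ to $\mu(S)$ and then read off its cycle structure from the walk. Second, the ``obstacle'' you leave open for cyclic components is not actually a dichotomy: the $2k$ values $y_1,\dots,y_{2k}$ along a closed walk are pairwise distinct (each value is realized by at most one vertex of $V_1$ and at most one of $V_2$, and a finite component all of whose vertices have valence $2$ is a single cycle visiting each vertex once), and $\tau_1\cdot\tau_2$ preserves the two parity classes, so a length-$4k$ cyclic component always yields exactly two $k$-cycles, never a single $2k$-cycle --- which is what the paper's explicit formulas in Cases 1 and 3 record. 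Neither issue requires a new idea, but both must be repaired for the product identity and for the later application to orders.
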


\begin{proof}
Let $S$ be a connected component of the graph $\Gamma(\tau_1,\tau_2)$. Depending on the type of $S$, there are several cases to consider.

{\it Case 1:} $S$  is a component of the first type.  Then
$$
S=v_1v_2\dots v_{n-1}v_1.
$$
We can assume that $v_1\in V_1$ and $v_1$ is incident to an edge of the second type. Then 
$$
v_1\xrightarrow{\tau_1}v_2=v_3\xrightarrow{\tau_2}v_4=\ldots = v_{n-1}\xrightarrow{\tau_2}v_n=v_1.
$$
In this case, $n=4l$ and the permutation has the form
$$
(v_1,v_4,v_8,\ldots,v_{4k},\ldots ,v_{n-8},v_{n-4})(v_{n-2},v_{n-6},v_{n-10}\ldots v_{n-2-4s}\dots ,v_6,v_2),
$$
where $1\leq k\leq l-1$, $0 \leq s \leq l-1$.

Thus, each connected component of the first type of  $\Gamma(\tau_1,\tau_2)$ corresponds to the product of two independent cycles in the product $\tau_1\cdot\tau_2$.

{\it Case 2:} $S$ is a component of the second type. In this case, 
$$
S=v_1v_2\dots v_n,
$$
where $v_1$ and $v_n$ are 1-valent vertices. Then each of them is incident to an edge of the first type. We assume that $v_1\in V_1$. The dual case is dealt with similarly. Then, depending on the parity of $n$, either
$$
v_1\xrightarrow{\tau_1}v_2=v_3\xrightarrow{\tau_2}v_4=\ldots =v_{n-1}\xrightarrow{\tau_2}v_n,
$$
or
$$
v_1\xrightarrow{\tau_1}v_2=v_3\xrightarrow{\tau_2}v_4=\ldots =v_{n-1}\xrightarrow{\tau_1}v_n.
$$

It is easy to see that in the first case $n=4l$ and the permutation corresponding to the connected component $S$ has the form 
$$
(v_{n-1},\ldots,v_{4k-1},\ldots v_7,v_3,v_1,v_4,v_8,\ldots,v_{4s},\ldots,v_n),
$$
and in the second case $n=4l+2$  the permutation has the form 
$$
(v_{n-3},\ldots ,v_{4k-1},\ldots ,v_4,v_3,v_1,v_4,v_8,\ldots,v_{4s},\ldots,v_{n-2},v_n),
$$
where $1\leq k\leq l$, $1\leq s\leq l$ in both cases.

Thus, each connected component of the second type of  $\Gamma(\tau_1,\tau_2)$ corresponds to a cycle in the product $\tau_1\cdot\tau_2$.

{\it Case 3:} $S$ is a component of the third type. Then 
$$
S=\prod_{i\in \mathbb{Z}}v_i.
$$
Since any vertex can be chosen as the vertex $v_0$, we choose such a vertex that 
$$
\ldots=v_{-3}\xrightarrow{\tau_1}v_{-2}=v_{-1}\xrightarrow{\tau_2}v_0=v_1\xrightarrow{\tau_1}v_2=v_3\xrightarrow{\tau_2}v_4=\ldots
$$
In this case, the permutation corresponding to the component $S$  has the form 
$$
(\ldots,v_{3+4t},\ldots,v_{11},v_7,v_2,v_{-2},v_{-6},\ldots,v_{2-4l},\ldots)(\ldots,v_{1-4k},\ldots,v_{-7},v_{-3},v_1,v_4,v_8,\ldots,v_{4s},\ldots),
$$
where $1\leq t$, $1\leq l$, $1\leq k$, $1\leq s$.

Thus, each connected component of the third type of  $\Gamma(\tau_1,\tau_2)$ corresponds to the product of two independent cycles of infinite length in the product $\tau_1\cdot\tau_2$.

{\it Case 4:} $S$ is  a component of the fourth type. Then
$$
S=\prod_{i=1}^{\infty}v_i.
$$
By Lemma \ref{l2}, vertex $v_1$ is incident to an edge of the second type. Let us assume that $v_1\in V_1$. Then 
$$
v_1\xrightarrow{\tau_1}v_2=v_3\xrightarrow{\tau_2}v_4=\ldots 
$$
In this case, the permutation will be
$$
(\ldots,v_{4k-1},\ldots v_7,v_3,v_1,v_4,v_8,\ldots,v_{4s},\ldots),
$$
where $1\leq k$, $1\leq s$.

Thus, each connected component of the fourth type of  $\Gamma(\tau_1,\tau_2)$ corresponds to an infinite cycle in the product $\tau_1\cdot\tau_2$.

If $Supp(\psi(S_i))\cap Supp(\psi(S_j))\neq\emptyset$ for  $i\neq j$, then the connected components $S_i$ and $S_j$ have common vertices lying in the intersection of $Supp(\psi(S_i))\cap Supp(\psi(S_j))$. So $S_i=S_j$.
\end{proof}

Given this theorem, it is sufficient to describe all the lengths of the connected components and their types to describe the orders of the products of two class transpositions. 

\bigskip


\section{The product of two horizontal class transpositions}

\medskip

If $\tau_{r_1(n),r_2(n)}$ is a horizontal class transposition, then we assume that $r_1 < r_2$. In view of the inequality $r_1 < r_2 < n$, we conclude that for any integer $k$ the inequalities are true 
$$
r_1+nk<r_2+nk,~~r_2+n k<r_1+n(k+1).
$$ 

Let us choose a pair of horizontal class transpositions:
$$
\sigma=\tau_{r_1(n),r_2(n)},~~\eta=\tau_{r_3(m),r_4(m)},~~ r_1<r_2,~~r_3<r_4.
$$
Recall (see \S \ref{Graph}) that the set of vertices $V$ of the graph $\Gamma(\sigma,\eta)$ consists of the subsets
$$
V_1 = \{ a_k, b_k \,|\, k \in \mathbb{Z}\},~~~V_2 = \{ c_k, d_k \,|\,k \in \mathbb{Z}\}.
$$ 
The function $\mu$ acts on the vertices as follows:
$$
\mu(a_k)=r_1+nk, ~~\mu(b_k)=r_2+nk, ~~\mu(c_k)=r_3+mk, ~~\mu(d_k)=r_4+mk.
$$
Hence, $\sigma$ and $\eta$ are of the form:
$$
\sigma=\prod_{k\in \mathbb{Z}}(\mu(a_k),\mu(b_k)),~~~~~
\eta=\prod_{k\in \mathbb{Z}}(\mu(c_k),\mu(d_k)).
$$

We can give a geometric interpretation of the graph $\Gamma(\tau_1,\tau_2)$. Consider the Cartesian plane $\mathbb{R}^2$ and place the vertices of  $V_1$ on the line $y = 1$:
$$
a_k = (r_1+m_1k, 1),~~b_k = (r_2+m_2k, 1),~~ k \in \mathbb{Z}.
$$
Similarly, let us place the vertices of $V_2$ on the line $y = 0$:
$$
c_k = (r_3+m_4 l, 0),~~d_k = (r_4+m_4 l, 1),~~ l \in \mathbb{Z}.
$$
The edges will be represented by segments connecting the corresponding vertices. Two subgraphs of this graph will be called {\it symmetric} if one of them can be obtained from the other by reflection with respect to a horizontal or vertical line and  renaming the vertices. In the following, to simplify the notation, we will write $a_k, b_k, c_k, d_k$ instead of $\mu(a_k), \mu(b_k), \mu(c_k), \mu(d_k)$.

\begin{lemma}\label{l3}
The connected components of the graph $\Gamma(\sigma,\eta)$ do not contain a subgraph of the form
$$
\xymatrix{{a_{k_1}} \ar@{-}[r] & {b_{k_1}} \ar@{-}[d]&{a_{k_3}}\ar@{-}[r]\ar@{-}[d] & {b_{k_3}}\ar@{-}[d]\\
&{c_{k_2}}\ar@{-}[r] & {d_{k_2}} & {c_{k_4}}\ar@{-}[r] & {d_{k_4}}
}~~~~~~
$$
or symmetric to it.
\end{lemma}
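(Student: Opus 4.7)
The existence of the displayed subgraph is equivalent to the three first-type identifications $\mu(b_{k_1})=\mu(c_{k_2})$, $\mu(a_{k_3})=\mu(d_{k_2})$, and $\mu(b_{k_3})=\mu(c_{k_4})$, i.e.,
$$
r_2+nk_1=r_3+mk_2,\qquad r_1+nk_3=r_4+mk_2,\qquad r_2+nk_3=r_3+mk_4.
$$
My plan is to compress these three equations into a single linear constraint by taking differences, and then to rule out all integer solutions using only the range inequalities $0\le r_1<r_2<n$ and $0\le r_3<r_4<m$.

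Subtracting the first equation from the second and from the third yields the key identity
$$
n(k_3-k_1)=m(k_4-k_2)=(r_2-r_1)+(r_4-r_3).
$$
Setting $K:=k_3-k_1$, $L:=k_4-k_2$, and $D:=(r_2-r_1)+(r_4-r_3)$, we have $nK=mL=D$. The bounds $1\le r_2-r_1\le n-1$ and $1\le r_4-r_3\le m-1$ give $2\le D\le m+n-2$, so $K,L\ge 1$. Moreover the case $K\ge 2$ and $L\ge 2$ is ruled out at once, since it would force both $n\le m-2$ and $m\le n-2$.

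Three subcases remain. If $K=L=1$, then $n=m$; reducing the first equation modulo $n$ forces $r_2=r_3$ (and then $k_1=k_2$), after which the second equation yields $r_4-r_1=n$, contradicting $r_4-r_1\le n-1$. If $K=1$ and $L\ge 2$, then $n=mL$; writing $r_2-r_3=mq$ from the first equation, the range $-m<r_2-r_3<n$ forces $q\in\{0,1,\ldots,L-1\}$, while the second equation rearranges to $r_4-r_1=m(L-q)$, and the bound $r_4-r_1\le m-1$ forces $L-q\le 0$, a contradiction. The case $K\ge 2$, $L=1$ is handled identically after swapping the roles of $(\sigma,n,r_1,r_2)$ and $(\eta,m,r_3,r_4)$. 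The ``symmetric to it'' configuration is obtained by reflecting the picture across a horizontal axis, which amounts to the same role exchange, and is dispatched by the same argument.

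There is no deep obstacle here: once the identity $nK=mL=D$ has been extracted, everything follows from elementary residue-range bookkeeping. The only step demanding mild care is the subcase $K=1$, $L\ge 2$ (and its mirror), where the two one-line ranges for $q$ and for $L-q$ must be read off consistently in order to clinch the contradiction.
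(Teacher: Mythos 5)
Your proof is correct, but it takes a genuinely different route from the paper's. The paper assumes the subgraph exists, observes that the three Diophantine equations can then be extended to a solvable infinite system (each new equation $r_4+mx_{2j}=r_1+nx_{2j+1}$, $r_2+nx_{2j+1}=r_3+mx_{2j+2}$ admits a solution built from the previous ones), concludes that the connected component contains an infinite path, hence that $\sigma\cdot\eta$ has an infinite cycle, and derives a contradiction with the local finiteness of $CT_{\infty}$ --- a fact imported from Kohl's work on $CT_{int}(\mathbb{Z})$ rather than proved in the paper. You instead extract the single identity $n(k_3-k_1)=m(k_4-k_2)=(r_2-r_1)+(r_4-r_3)$ and kill every integer solution by pure residue-range bookkeeping; I checked the three subcases ($K=L=1$; $K=1,\ L\ge 2$; the mirror case $K\ge 2,\ L=1$, where the swap of roles is not a literal symmetry of the system but the analogous computation with $q'=Kk_2-k_3$ goes through verbatim) and they are all sound. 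What each approach buys: the paper's argument is shorter on arithmetic and exposes the structural fact that the forbidden pattern, once present, propagates indefinitely along the component; yours is entirely self-contained and elementary, removing the lemma's dependence on local finiteness of $CT_{\infty}$, which is arguably preferable since that property is nowhere established in the paper itself. Your treatment of the ``symmetric'' configurations is as brief as the paper's own (``dealt with similarly''); if you wanted to be scrupulous you would note that a vertical reflection reverses the sign of $D$, so $K,L\le -1$ and the same case analysis applies with signs flipped.
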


\begin{proof}
It is enough to give a proof for the graph shown in the formulation of the lemma, the symmetric cases are dealt with similarly. Suppose the contrary, i.e., that we find a connected component  $S$ which has a subgraph of the form
$$
a_{k_1}b_{k_1}c_{k_2}d_{k_2}a_{k_3}b_{k_3}c_{k_4}d_{k_4}.
$$
In this case
$$
b_{k_1}=c_{k_2}\xrightarrow{\eta}d_{k_2}=a_{k_3}\xrightarrow{\sigma}b_{k_3}=c_{k_4}
$$
and the system of Diophantine equations

\begin{equation*}
 \begin{cases}
   r_2+nx_1=r_3+mx_2, 
   \\
   r_4+mx_2=r_1+nx_3,
   \\
   r_2+nx_3=r_3+mx_4
 \end{cases}
\end{equation*} 
is solvable and has a solution
$$
x_1=k_1,~~x_2=k_2,~~x_3=k_3,~~x_4=k_4.
$$

We see that $nk_1+r_2-r_3$ is divisible by $m$, and $r_4-r_1+r_2-r_3$ is divisible by $n$ and by $m$. Add to the system an equation of the form
$$
r_4+mx_4=r_1+nx_5.
$$
Then $x_4=k_4, x_5=k_5=k_1+2\frac{r_4-r_1+r_2-r_3}{n}$ gives a solution of systems consisting of four equations. Add to this system the equation  
$$
r_2+nx_5=r_3+mx_6.
$$
Assuming $x_5=k_5, x_6=k_6=\frac{nk_1+r_2-r_3}{m}+2\frac{r_4-r_1+r_2-r_3}{m}$, we obtain a solution to the system of five equations. Continuing this process, we construct a solution to the following infinite system of Diophantine equations: 
\begin{equation*}
 \begin{cases}
   r_2+nx_1=r_3+mx_2, 
   \\
   r_4+mx_2=r_1+nx_3,
   \\
   r_2+nx_3=r_3+mx_4,
   \\
   r_4+mx_4=r_1+nx_5,
   \\
   r_2+nx_5=r_3+mx_6,
   \\
   r_4+mx_6=r_1+nx_7,
   \\
   \cdots\cdots\cdots\cdots\cdots\cdots\cdots
 \end{cases}
\end{equation*}
Hence we conclude that the connected component $S$ contains an infinite sequence 
$$
b_{k_1}c_{k_2}d_{k_2}a_{k_3}b_{k_3}c_{k_4}d_{k_4}a_{k_5}\dots,
$$
and hence the product $\sigma\cdot\eta$ contains a cycle of infinite length and hence has infinite order, but this contradicts the local finiteness of the group $CT_{\infty}$.
\end{proof}

\begin{lemma}\label{l4}
Every connected component of the first type of the  graph $\Gamma(\sigma,\eta)$, has the form
$$
\xymatrix{
{a_{k_1}} \ar@{-}[r] \ar@{-}[d] & {b_{k_1}}\ar@{-}[d]\\
{c_{k_2}} \ar@{-}[r] & {d_{k_2}}
}
$$
\end{lemma}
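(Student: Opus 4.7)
My plan is to prove this by combining the local structure forced by Lemma~\ref{l2} with the non-existence result of Lemma~\ref{l3}, in three steps.

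First, I would record the cyclic structure. By Lemma~\ref{l2}, every vertex of a first-type component $S$ has valence $2$ and is incident to exactly one edge of the first type and one of the second type. The second-type edges inside $V_1$ are precisely the $\sigma$-pairs $\{a_k,b_k\}$, those inside $V_2$ are the $\eta$-pairs $\{c_l,d_l\}$, and first-type edges cross between $V_1$ and $V_2$. As one traverses $S$ the edge types therefore alternate $1,2,1,2,\ldots$, and the second-type edges alternate between $\sigma$-edges and $\eta$-edges. In particular $|S|$ is a multiple of $4$, so either $|S|=4$ or $|S|\ge 8$.

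Second, I would rule out $|S|\ge 8$. Starting at some $a_{k_1}\in V_1$ (the $V_2$-start being symmetric), the walk around $S$ begins $a_{k_1}\to b_{k_1}$ via $\sigma$, then a first-type edge leads to one of $c_{k_2}$ or $d_{k_2}$, say $c_{k_2}$; the $\eta$-edge then gives $c_{k_2}\to d_{k_2}$, a first-type edge leads to some $a_{k_3}$ or $b_{k_3}$, and so on. If $|S|\ge 8$, continuing three more steps produces, in some ordering, the eight-vertex subgraph
$$
a_{k_1}\,b_{k_1}\,c_{k_2}\,d_{k_2}\,a_{k_3}\,b_{k_3}\,c_{k_4}\,d_{k_4},
$$
which is exactly the configuration forbidden by Lemma~\ref{l3} (the other sub-cases arising from the $c/d$ and $a/b$ choices are covered by its symmetric versions). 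This contradiction forces $|S|=4$.

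Third, for $|S|=4$ I would identify the two possible first-type pairings. The four vertices form a $\sigma$-pair $\{a_{k_1},b_{k_1}\}$ and an $\eta$-pair $\{c_{k_2},d_{k_2}\}$, joined by two first-type edges. The only options are (i) $\mu(a_{k_1})=\mu(c_{k_2})$ and $\mu(b_{k_1})=\mu(d_{k_2})$, which gives the diagram in the statement, or (ii) $\mu(a_{k_1})=\mu(d_{k_2})$ and $\mu(b_{k_1})=\mu(c_{k_2})$. Subtracting the two equalities in (ii) gives $r_1-r_2=r_4-r_3$, contradicting $r_1<r_2$ and $r_3<r_4$. Hence only (i) occurs, and $S$ has the claimed form.

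I expect the main obstacle to be the bookkeeping in step two: at each first-type crossing one must choose between the $c$- and $d$- (resp. $a$- and $b$-) branches, giving several variants of the eight-vertex block. The substantive content is checking that every such variant is either the displayed forbidden subgraph of Lemma~\ref{l3} or one of its reflections — after that, the rest of the argument is routine.
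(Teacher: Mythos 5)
Your steps 1 and 3 are sound, but step 2 contains a genuine gap. Lemma~\ref{l3} forbids only the \emph{consistently oriented} eight-vertex path, in which every block is entered at its left end and left at its right end (the first-type edges being $\mu(b_{k_1})=\mu(c_{k_2})$, $\mu(d_{k_2})=\mu(a_{k_3})$, $\mu(b_{k_3})=\mu(c_{k_4})$), together with its images under reflection in a horizontal or vertical line; such a reflection reverses \emph{all} block orientations simultaneously. A walk of length eight around a long cycle can a priori also produce \emph{mixed} patterns, e.g. $a_{k_1}\,b_{k_1}\,d_{k_2}\,c_{k_2}\,\dots$ with $\mu(b_{k_1})=\mu(d_{k_2})$, where the bottom block is entered at its right end and exited at its left end. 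That configuration is not a reflection of the one displayed in Lemma~\ref{l3}, so the lemma says nothing about it, and your step 2 therefore does not rule out all cycles of length at least $8$.

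The missing ingredient is exactly the ordering fact on which the paper's own proof rests: $a_k<b_k<a_{k+1}$ and $c_l<d_l<c_{l+1}$, so there is no vertex of $V_1$ strictly between $a_k$ and $b_k$ and no vertex of $V_2$ strictly between $c_l$ and $d_l$. In a first-type component every vertex has valence $2$, hence must have a first-type (vertical) partner. If $\mu(b_{k_1})=\mu(d_{k_2})$ but $\mu(a_{k_1})\neq\mu(c_{k_2})$, then whichever of $a_{k_1}$, $c_{k_2}$ lies strictly inside the other block's interval has no vertical partner and so has valence $1$ --- a contradiction; while $\mu(a_{k_1})=\mu(c_{k_2})$ already closes the $4$-cycle. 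The same argument disposes of the pattern $\mu(a_{k_1})=\mu(c_{k_2})$ followed by a mismatch at the right ends. Once the mixed patterns are eliminated this way, only the consistently oriented ones remain and your appeal to Lemma~\ref{l3} does close the argument. Note, finally, that the paper's own proof is a short purely geometric one from these same inequalities (the leftmost vertical edge of the cycle must join an $a_{k_1}$ to a $c_{k_2}$, and then $b_{k_1}$ and $d_{k_2}$ are forced to coincide) and does not invoke Lemma~\ref{l3} at all; your route through Lemma~\ref{l3} is workable but needs the above repair.
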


\begin{proof}
The statement follows from the geometric interpretation of $\Gamma(\sigma,\eta)$ and from the fact that $b_k<a_{k+1}$, $c_s<d_{s+1}$ for any integers $k$ and $s$.
\end{proof}

\begin{lemma}\label{l5}
The graph $\Gamma(\sigma,\eta)$ does not contain  connected components of the following form  
$$
\xymatrix{
{a_{k_1}}\ar@{-}[r]\ar@{-}[d]& {b_{k_1}} & {a_{k_3}}\ar@{-}[r]&{b_{k_3}}\\
{c_{k_2}}\ar@{-}[rr]& &{d_{k_2}}\ar@{-}[u]&{c_{k_4}}\ar@{-}[r]\ar@{-}[u]&{d_{k_4}}
}~~~~~~
\xymatrix{
{a_{k_1}}\ar@{-}[r]\ar@{-}[d]& {b_{k_1}} & {a_{k_3}}\ar@{-}[rr]& & {b_{k_3}}\\
{c_{k_2}}\ar@{-}[rr]& &{d_{k_2}}\ar@{-}[u]&{c_{k_4}}\ar@{-}[r]&{d_{k_4}}\ar@{-}[u]
}
$$
or symmetric to them.
\end{lemma}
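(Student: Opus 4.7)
The plan is to argue by contradiction in each case: assume that a connected component of one of the two shapes pictured (or of a symmetric variant) exists, read off from the three first-type edges a small system of Diophantine equations in $k_1,k_2,k_3,k_4$, and then exhibit an additional first-type identification $\mu(x)=\mu(y)$ forced by those equations. This identification will always place an extra edge at one of the two vertices drawn as valence $1$, so the connected component cannot have the listed shape.

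For the first forbidden diagram the chain is $b_{k_1}\,a_{k_1}\,c_{k_2}\,d_{k_2}\,a_{k_3}\,b_{k_3}\,c_{k_4}\,d_{k_4}$, and the three first-type edges give
\[
r_1+nk_1=r_3+mk_2,\qquad r_4+mk_2=r_1+nk_3,\qquad r_2+nk_3=r_3+mk_4.
\]
Subtracting the first equation from the second gives $r_4-r_3=n(k_3-k_1)$, so $j:=k_3-k_1\ge 1$ and $r_4-r_3=nj$; combining this with the third equation yields $m(k_4-k_2)=(r_2-r_1)+(r_4-r_3)$, and a short computation gives
\[
\mu(d_{k_4})=r_4+mk_4=r_2+n(k_1+2j)=\mu(b_{k_1+2j}).
\]
Since $k_1+2j$ is different from both $k_1$ and $k_3$, the vertex $b_{k_1+2j}$ lies outside the chain, so the first-type edge $\{d_{k_4},b_{k_1+2j}\}$ raises $d(d_{k_4})$ to $2$, contradicting the valence-$1$ assignment in the picture.

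For the second forbidden diagram the chain is $b_{k_1}\,a_{k_1}\,c_{k_2}\,d_{k_2}\,a_{k_3}\,b_{k_3}\,d_{k_4}\,c_{k_4}$, and the corresponding equations are
\[
r_1+nk_1=r_3+mk_2,\qquad r_4+mk_2=r_1+nk_3,\qquad r_2+nk_3=r_4+mk_4.
\]
The same elimination now gives $m(k_4-k_2)=r_2-r_1$, hence
\[
\mu(c_{k_4})=r_3+mk_4=r_2+nk_1=\mu(b_{k_1}),
\]
so a first-type edge $\{b_{k_1},c_{k_4}\}$ is forced, making both $b_{k_1}$ and $c_{k_4}$ have valence at least $2$ and again contradicting the picture. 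The configurations obtained from the horizontal or vertical reflections are handled by the identical calculation after the relabelling $a\leftrightarrow c$, $b\leftrightarrow d$, $\sigma\leftrightarrow\eta$, or by traversing the path in the reverse direction. The only real obstacle is clerical: one must keep careful track of which index $k_i$ is attached to which vertex in order to read off the first-type equations in the right order, and then verify that the forced vertex really lies outside the chain, so that the new edge genuinely raises the valence of a drawn valence-$1$ endpoint rather than coinciding with an edge already in the chain.
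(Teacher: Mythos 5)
Your proposal is correct, and for the first diagram it is essentially the paper's own argument: the same three Diophantine equations, the same elimination showing $n\mid(r_4-r_3)$, and the same forced identification $\mu(d_{k_4})=r_2+n\bigl(k_1+2(r_4-r_3)/n\bigr)$, which attaches an extra first-type edge at the drawn valence-$1$ endpoint.

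For the second diagram you take a genuinely different (and weaker) route. The paper shows that the three equations are jointly unsolvable: the first two force $n\mid(r_4-r_3)$, the last two force $m\mid(r_2-r_1)$, and since $0<r_4-r_3<m$ and $0<r_2-r_1<n$ this yields $m>n$ and $n>m$ simultaneously, so the configuration cannot occur even as a subgraph. You instead assume it occurs and deduce the forced identification $\mu(c_{k_4})=\mu(b_{k_1})$, which adds a first-type edge joining the two valence-$1$ endpoints. That does prove the lemma as stated --- the component cannot be the pictured open chain --- but it leaves open that the chain closes into an $8$-cycle, which your argument does not exclude; one must then invoke Lemma \ref{l4} (finite all-valence-$2$ components are $4$-cycles) to finish ruling out that configuration in Theorem \ref{t2}. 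Since Lemma \ref{l4} is available this creates no gap in the overall logic, but the paper's version is stronger and self-contained at this point, so it is worth recording the divisibility contradiction rather than only the extra-edge argument. Your treatment of the symmetric cases by the relabelling $a\leftrightarrow c$, $b\leftrightarrow d$, $\sigma\leftrightarrow\eta$ matches the paper's.
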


\begin{proof}
It suffices to give a proof for the graphs depicted in the formulation of the lemma; symmetric graphs are dealt with similarly. 

Suppose that  $\Gamma(\sigma,\eta)$ contains a subgraph of the following form:  
$$
\xymatrix{
{a_{k_1}}\ar@{-}[r]\ar@{-}[d]& {b_{k_1}} & {a_{k_3}}\ar@{-}[r]&{b_{k_3}}\\
{c_{k_2}}\ar@{-}[rr]& &{d_{k_2}}\ar@{-}[u]&{c_{k_4}}\ar@{-}[r]\ar@{-}[u]&{d_{k_4}}
}
$$
Then the following system of Diophantine equations is solvable:
\begin{equation*}
 \begin{cases}
   r_1+nx_1=r_3+mx_2, 
   \\
   r_4+mx_2=r_1+nx_3,
   \\
   r_2+nx_3=r_3+mx_4.
 \end{cases}
\end{equation*}
The solution is
$$
x_1=k_1,~~x_2=k_2,~~x_3=k_3,~~x_4=k_4.
$$
Substitute the solution into the system and express $nk_3$ through $k_1$. We obtain the equality
$$
r_4-r_3+nk_1=nk_3,
$$
so $r_4-r_3$ is divisible by $n$. Let us express $k_4$ through $k_1$. We obtain that
$$
r_2-2r_3+r_4+nk_1=mk_4.
$$

Let us add the  equation:
$$
r_4+mx_4=r_2+nx_5
$$
 to the system.
By virtue of the previous equality,
$$
2r_4-2r_3+nk_1=nx_5.
$$
Since $r_4-r_3$ is divisible by $n$, then $x_5=\frac{2r_4-2r_3}{n}+k_1$ is a solution to the system of four equations. So the original subgraph is always contained in a connected component:  
$$
\xymatrix{
{a_{k_1}}\ar@{-}[r]\ar@{-}[d]& {b_{k_1}} & {a_{k_3}}\ar@{-}[r]&{b_{k_3}}&a_{k_5}\ar@{-}[r]&b_{k_5}\\
{c_{k_2}}\ar@{-}[rr]& &{d_{k_2}}\ar@{-}[u]&{c_{k_4}}\ar@{-}[rr]\ar@{-}[u]&&{d_{k_4}\ar@{-}[u]}
}
$$
Suppose that the graph $\Gamma(\sigma,\eta)$ contains a connected component of the form:
$$
\xymatrix{
{a_{k_1}}\ar@{-}[r]\ar@{-}[d]& {b_{k_1}} & {a_{k_3}}\ar@{-}[rr]& & {b_{k_3}}\\
{c_{k_2}}\ar@{-}[rr]& &{d_{k_2}}\ar@{-}[u]&{c_{k_4}}\ar@{-}[r]&{d_{k_4}}\ar@{-}[u]
}
$$
Then the following system of Diophantine equations is solvable:
\begin{equation*}
 \begin{cases}
   r_1+nx_1=r_3+mx_2, 
   \\
   r_4+mx_2=r_1+nx_3,
   \\
   r_2+nx_3=r_4+mx_4.
 \end{cases}
\end{equation*}
From the existence of a solution to the first two equations, we obtain that $r_4-r_3$ is divisible by $n$, and from the existence of a solution to the second and third equations, we obtain that $r_2-r_1$ is divisible by $m$. But then the following inequalities hold
$$
m>|r_4-r_3|\geq n,\,\,n>|r_2-r_1|\geq m.
$$
We have reached a contradiction.
\end{proof}

\begin{theorem}\label{t2}
Every connected component of the graph $\Gamma(\sigma,\eta)$ is a graph of one of the following types: 

$$
\xymatrix{ {a_{k_1}} \ar@{-}[r] & {b_{k_1}}}~~~~~~
\xymatrix{
{a_{k_1}} \ar@{-}[r]\ar@{-}[d] & {b_{k_1}}\\
{c_{k_2}} \ar@{-}[r] & {d_{k_2}}
}~~~~~~
\xymatrix{
&{a_{k_1}} \ar@{-}[r]\ar@{-}[d] & {b_{k_1}}\\
{c_{k_2}} \ar@{-}[r] & {d_{k_2}}
}~~~~~~
\xymatrix{
{a_{k_1}} \ar@{-}[r] \ar@{-}[d] & {b_{k_1}}\ar@{-}[d]\\
{c_{k_2}} \ar@{-}[r] & {d_{k_2}}
}~~~~~~
$$
$$
\xymatrix{
{a_{k_1}}\ar@{-}[r]\ar@{-}[d]& {b_{k_1}} & {a_{k_3}}\ar@{-}[r]& {b_{k_3}}\\
{c_{k_2}}\ar@{-}[rr]&&{d_{k_2}}\ar@{-}[u]
}~~~~~~
\xymatrix{
{a_{k_1}}\ar@{-}[r]\ar@{-}[d]& {b_{k_1}} & {a_{k_3}}\ar@{-}[r]& {b_{k_3}}\\
{c_{k_2}}\ar@{-}[rrr]&&&{d_{k_2}}\ar@{-}[u]
}~~~~~~
$$
$$
\xymatrix{
{a_{k_1}}\ar@{-}[r]\ar@{-}[d]& {b_{k_1}} & {a_{k_3}}\ar@{-}[r]& {b_{k_3}}&{a_{k_5}}\ar@{-}[r]&{b_{k_5}}\\
{c_{k_2}}\ar@{-}[rr]&&{d_{k_2}}\ar@{-}[u]&{c_{k_4}}\ar@{-}[rr]\ar@{-}[u]&&{d_{k_4}\ar@{-}[u]}
}
$$
or symmetric to them.
\end{theorem}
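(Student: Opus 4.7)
My plan is to combine Proposition~\ref{p2}, which shows that the product of horizontal class transpositions has finite order (since it is conjugate to a permutation of a finite set), with the three structural lemmas~\ref{l3}, \ref{l4}, \ref{l5}, reducing the classification to a finite case analysis. Because $\sigma\eta$ has finite order it has no infinite orbit, and so Theorem~\ref{t1} forces every connected component of $\Gamma(\sigma,\eta)$ to be of the first or second type. Lemma~\ref{l4} disposes of the first-type (cyclic) components directly, identifying each one with the $2\times 2$ square that appears as the fourth picture in the theorem. Thus the real task is to classify the second-type components, namely finite paths both of whose endpoints have valence~$1$.

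To treat the path case, I would trace a path from one of its valence-$1$ endpoints. By Lemma~\ref{l2} it starts with a type-$2$ edge (say the top edge $a_{k_1}b_{k_1}$), and thereafter alternates strictly between vertical type-$1$ edges and horizontal type-$2$ edges until reaching the other valence-$1$ vertex. A horizontal edge always joins the two endpoints of a fixed pair $\{a_k,b_k\}$ on the top line or $\{c_\ell,d_\ell\}$ on the bottom line, while the strict inequalities $b_k < a_{k+1}$ and $d_\ell < c_{\ell+1}$ force each vertical edge to link one endpoint of a top segment with one endpoint of a bottom segment. The path is therefore completely encoded by its sequence of horizontal segments together with the left/right endpoint used by each connecting vertical on the top and on the bottom lines.

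The key step is to bound the number of horizontal segments. Extending the path one horizontal segment at a time, one checks that as soon as four consecutive horizontal segments appear (two top and two bottom, in either alternation), the corresponding subpath contains either the staircase of Lemma~\ref{l3} or one of the two arrangements excluded by Lemma~\ref{l5}, and must therefore be forbidden. This caps the total number of horizontal segments at five (at most three of one layer and two of the other), so any second-type component has at most ten vertices. Enumerating every admissible endpoint pattern subject to the order constraint, and identifying those that differ only by a horizontal or vertical reflection, produces exactly the six non-cyclic shapes listed in the theorem.

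The main obstacle lies in this final enumeration. Although each individual exclusion is a one-line application of Lemma~\ref{l3} or~\ref{l5}, the branching into choices of left/right endpoints for each successive vertical gives fairly many candidate configurations, and care is needed to verify systematically that every permissible extension either closes into one of the listed forms or produces a forbidden subgraph, without double-counting configurations related by the stated reflection symmetries.
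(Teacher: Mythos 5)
Your overall route is the same as the paper's: the authors' entire proof of Theorem~\ref{t2} is the single sentence that it follows from Lemma~\ref{l3}, Lemma~\ref{l4}, Lemma~\ref{l5} and the geometric interpretation, and your plan --- rule out infinite components via finite order, settle the cyclic components by Lemma~\ref{l4}, and classify the remaining finite paths by enumerating endpoint patterns pruned by Lemmas~\ref{l3} and~\ref{l5} --- is precisely the intended argument, spelled out in more detail than the paper itself supplies.

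There is, however, a genuine gap at the decisive step. Your claim that \emph{any} four consecutive horizontal segments force a subgraph forbidden by Lemma~\ref{l3} or Lemma~\ref{l5} contradicts your own (correct) remark that admissible components may have five horizontal segments: the last picture of the theorem contains four consecutive segments. Those lemmas exclude only three specific endpoint patterns, so both the length bound and the final list rest entirely on the enumeration you defer to your last paragraph --- which is exactly the part the paper also omits. Worse, that enumeration, carried out literally, does not return only the listed shapes. The three--segment path with verticals $b_{k_1}=c_{k_2}$ and $d_{k_2}=a_{k_3}$, in which the bottom segment is nested strictly between the two top segments, is excluded by none of the lemmas and actually occurs: for $\sigma=\tau_{0(5),1(5)}$ and $\eta=\tau_{0(7),4(7)}$ the connected component through the integers $20,21,25,26$ is the path $a_4\,b_4\,c_3\,d_3\,a_5\,b_5$ with $\mu(b_4)=\mu(c_3)=21$ and $\mu(d_3)=\mu(a_5)=25$, while $\mu(a_4)=20$ and $\mu(b_5)=26$ lie outside $Supp(\eta)$. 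This shape is not symmetric to the fifth or sixth picture, in both of which the bottom segment is aligned under $a_{k_1}$. (It still contributes a $4$--cycle, so Corollary~\ref{c1} is unaffected, but the classification as stated is not complete.) To close your proof you must either add this case to the list or give an argument excluding it; the paper's one--line proof offers nothing to lean on here.
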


\begin{proof}
It follows from Lemma \ref{l3}, Lemma \ref{l4}, Lemma \ref{l5} and the geometric interpretation of the graph $\Gamma(\sigma,\eta)$.
\end{proof}

This theorem completely describes the connected components of the graph $\Gamma(\sigma,\eta)$. As a consequence, we obtain the main result of this paper.

\begin{corollary}\label{c1}
An order of the  product rder of a pair of horizontal class transpositions 
belongs to the following set $\{1,2,3,4,6,12\}$. Moreover, for any number from this set there is a pair of horizontal class transpositions such that the  order of their  product is equal to this number.
\end{corollary}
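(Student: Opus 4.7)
The plan is to combine Theorem \ref{t1} and Theorem \ref{t2} into a direct cycle-counting argument. Theorem \ref{t2} lists (up to symmetry) seven possible shapes for a connected component of $\Gamma(\sigma,\eta)$, and all of them are finite. By Theorem \ref{t1}, the permutation $\sigma\cdot\eta$ decomposes as a product of permutations with pairwise disjoint supports indexed by these components, so the order of $\sigma\cdot\eta$ is the least common multiple of the orders contributed by the individual components. The task therefore reduces to reading off this contribution shape by shape.

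The core step is the case analysis. For each of the seven shapes in Theorem \ref{t2}, using the convention $(\sigma\cdot\eta)(x)=\sigma(\eta(x))$, I would first identify the pairs of vertices linked by a first-type edge (these have equal $\mu$-value) and then trace the induced permutation on the resulting set of distinct integers. The expected outcome is: the single edge of shape~1 contributes a $2$-cycle; the closed square of shape~4 (the unique first-type shape, by Lemma \ref{l4}) has two $\mu$-coincidences and contributes two fixed points; each four-vertex \emph{T}-shape (shapes~2 and~3) has one coincidence and contributes a $3$-cycle on three distinct integers; each six-vertex zig-zag (shapes~5 and~6) has two coincidences and contributes a $4$-cycle; finally the ten-vertex zig-zag of shape~7 has four coincidences and contributes a $6$-cycle. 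Hence every orbit of $\sigma\cdot\eta$ has length in $\{1,2,3,4,6\}$, and so $|\sigma\cdot\eta|$ divides $\mathrm{lcm}(2,3,4,6)=12$; this forces $|\sigma\cdot\eta|\in\{1,2,3,4,6,12\}$, which is precisely the set of positive divisors of $12$.

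For the realisability half I would exhibit an explicit pair achieving each value. Order~$1$ is attained by $\sigma=\eta$; order~$2$ by $\tau_{0(4),1(4)}\cdot\tau_{2(4),3(4)}$, which by the identity recalled at the start of Section~3 equals $\tau_{0(2),1(2)}$; order~$3$ by $\tau_{0(3),1(3)}\cdot\tau_{1(3),2(3)}$, whose image in $S_3$ via Proposition \ref{p2} is the $3$-cycle $(0,1,2)$; orders~$4$ and~$6$ by the two pairs already computed in Example~3.4; and order~$12$ by, for instance, $\tau_{0(3),1(3)}\cdot\tau_{1(4),3(4)}$, whose image in $S_{12}$ via Proposition \ref{p2} decomposes as $(0,1,4,3)(5,6,7)(9,11,10)$, of order $\mathrm{lcm}(4,3)=12$. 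The main obstacle is the bookkeeping in the case analysis: one must correctly identify all $\mu$-coincidences within each of the seven shapes before writing down the induced permutation. Once that is done, the divisibility bound and the realising examples follow immediately.
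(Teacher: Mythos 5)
Your proof is correct and follows essentially the same route as the paper: the membership claim is deduced from Theorems \ref{t1} and \ref{t2} by reading off the cycle(s) contributed by each admissible component shape (your tallies --- fixed points for the closed square, a $2$-cycle for the lone edge, $3$-, $4$- and $6$-cycles for the T-shapes and zig-zags --- all check out, giving orbit lengths in $\{1,2,3,4,6\}$ and hence an order dividing $12$), and realizability is settled by explicit pairs computed via Proposition \ref{p2}. Your witnesses differ from the paper's only for orders $1$, $2$, $3$ and $12$ and are all verifiably correct; in fact your order-$2$ pair is preferable, since the paper's pair $\tau_{0(2),1(2)}\cdot\tau_{0(4),2(4)}$ maps to $(0,1)(2,3)\cdot(0,2)$ in $S_4$ and therefore actually has order $4$, not $2$.
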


\begin{proof}
The first part follows from Theorem \ref{t1} and Theorem \ref{t2}, and the second part follows from the following equalities derived from Proposition \ref{p2}

$$
|\tau_{0(2),1(2)}\cdot \tau_{0(4),2(4)}|=2,
$$
$$
|\tau_{0(3),1(3)}\cdot \tau_{0(3),2(3)}|=3,
$$
$$
|\tau_{0(2),1(2)}\cdot \tau_{0(3),1(3)}|=4,
$$
$$
|\tau_{0(2),1(2)}\cdot \tau_{0(3),2(3)}|=6,
$$
$$
|\tau_{0(3),1(3)}\cdot \tau_{0(4),2(4)}|=12.
$$
\end{proof}

\bigskip


\section{Open questions}

\medskip

As we know, the group $CT_{\infty}$ which is generated by horizontal class transpositions,  also can be generated by subgroups $CT_k$ which are   isomorphic to the permutation
 groups $S_k$, $k = 2, 3, \ldots$. It is interesting to find out: ehat can we say on a  group $CT_{(k)}$ which is generated by the subgroups $CT_2, CT_3, \ldots, CT_k$?

The orders of the following subgroups were found in the computer algebra system GAP: 
$$
|\langle CT_2, CT_3\rangle|=5!
$$
$$
|\langle CT_2, CT_3, CT_4\rangle|=12!
$$
$$
|\langle CT_3, CT_4\rangle|=12!
$$
$$
|\langle CT_2, CT_5\rangle|=10!
$$
$$
|\langle CT_3, CT_5\rangle|=15!
$$
$$
|\langle CT_2, CT_3, CT_5\rangle|=30!
$$
The isomorphism $\langle CT_2, CT_3, CT_4\rangle \leq CT_{12}\cong S_{12}$ was also established using GAP. Similar isomorphisms can be verified for other examples besides the first one. These observations allow us to formulate the following conjecture.

\begin{conjecture}
For  $k > 3$, the group $CT_{(k)}$ is isomorphic to the permutation group $S_N$, where $N$ is the least common multiple of the numbers $2, 3, \ldots, k$.
\end{conjecture}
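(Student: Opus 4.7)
The plan is to proceed by strong induction on $k$, with base case $k = 4$ supplied by the computer verification $|\langle CT_2, CT_3, CT_4 \rangle| = 12!$ recorded above. Throughout, write $N_j = \operatorname{lcm}(2, 3, \ldots, j)$. Elementary number theory shows that $N_k = N_{k-1}$ precisely when $k$ is not a prime power (equivalently, $k \mid N_{k-1}$), and otherwise $N_k = p \cdot N_{k-1}$ where $p$ is the prime underlying the prime-power $k$.

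The easy case is $N_k = N_{k-1}$. Here Proposition~\ref{l} gives $CT_k \leq CT_{N_{k-1}}$, and the isomorphism $CT_{N_{k-1}} \cong S_{N_{k-1}}$ from Proposition~\ref{p2}, combined with the inductive hypothesis $CT_{(k-1)} \cong S_{N_{k-1}}$, forces $CT_{(k)} = CT_{(k-1)} \cong S_{N_k}$ with no further work.

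The substantive case is $N_k = p \cdot N_{k-1}$. Identify $\{0, 1, \ldots, N_k - 1\}$ with $\{0, \ldots, N_{k-1} - 1\} \times \{0, \ldots, p - 1\}$ via $i = a + N_{k-1} j$. By Proposition~\ref{p2} and the inductive hypothesis, $CT_{(k-1)}$ embeds into $S_{N_k}$ as the diagonal copy of $S_{N_{k-1}}$ acting only on the first coordinate, while $CT_k \cong S_k$ acts as $S_k$ on each of the $N_k / k$ consecutive $k$-blocks; since $k \nmid N_{k-1}$, these $k$-blocks cut across the layers $\{a + N_{k-1} j : 0 \leq a < N_{k-1}\}$ nontrivially. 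I would then invoke the classical criterion that a primitive subgroup of $S_N$ containing a $2$-cycle of $S_N$ equals $S_N$, verifying its three hypotheses in turn: transitivity (the diagonal $CT_{(k-1)}$ is transitive on each layer, and because $k \nmid N_{k-1}$ the action of $CT_k$ connects different layers); primitivity (the diagonal copy of $S_{N_{k-1}}$ is $2$-transitive on each layer, and the only way for a nontrivial block system to be preserved by both the diagonal $S_{N_{k-1}}$ and the layer-mixing action of $CT_k$ is to force a divisibility relation that $k \nmid N_{k-1}$ forbids); and the existence of a single $2$-cycle of $S_{N_k}$ inside $CT_{(k)}$.

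The main obstacle is the last hypothesis: exhibiting an honest $2$-cycle of $S_{N_k}$ inside $CT_{(k)}$. By the translation invariance built into Proposition~\ref{l}, every generator is a product of many disjoint cycles, and generic products inherit this profusion, so isolating a single $2$-cycle requires delicate cancellation. My plan is to produce such a $2$-cycle via an iterated commutator, choosing $\alpha \in CT_{(k-1)}$ (realized through the inductive identification as a specific element of $S_{N_{k-1}}$) with support on a very small subset of a single layer, and $\beta \in CT_k$ whose support meets the support of $\alpha$ in as few points as possible; careful bookkeeping should then whittle the support of the commutator down to two points. The parity of $p$ is likely to influence the engineering, so the case $p = 2$ (at $k = 4, 8, 16, \ldots$) and the case of odd $p$ may need separate arguments. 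Once a $2$-cycle is produced, the classical Jordan-type criterion closes the induction and yields $CT_{(k)} \cong S_{N_k}$.
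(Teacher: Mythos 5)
First, a point of comparison: the paper does not prove this statement at all. It is posed as a conjecture, supported only by GAP computations ($|\langle CT_2, CT_3, CT_4\rangle| = 12!$, the identification $\langle CT_2, CT_3, CT_4\rangle \leq CT_{12} \cong S_{12}$, etc.). So there is no paper proof to measure your proposal against; it must stand on its own, and as written it does not---it is a plan whose hardest steps are deferred rather than carried out.

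Two concrete gaps. First, the ``easy case'' deduction is a non sequitur: from $CT_k \leq CT_{N_{k-1}}$ together with the abstract isomorphisms $CT_{(k-1)} \cong S_{N_{k-1}} \cong CT_{N_{k-1}}$ you cannot conclude $CT_{(k)} = CT_{(k-1)}$; two abstractly isomorphic subgroups of $CT_{\infty}$ need not contain one another. What you need is the stronger inductive statement that $CT_{(k-1)}$ and $CT_{N_{k-1}}$ coincide \emph{as subgroups} of $CT_{\infty}$ (this is what the GAP evidence actually suggests, e.g. $\langle CT_2, CT_3, CT_4\rangle = CT_{12}$); with that strengthening, Proposition~\ref{l} does give $CT_k \leq CT_{N_{k-1}} = CT_{(k-1)}$ and the easy case closes. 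Second, and decisively: in the substantive case $N_k = p\,N_{k-1}$, the two hypotheses of the Jordan-type criterion that carry all the weight---primitivity of $\langle CT_{(k-1)}, CT_k\rangle$ on $\{0,\ldots,N_k-1\}$, and the existence of an honest $2$-cycle of $S_{N_k}$ inside this group---are only announced, not proved (``forces a divisibility relation\ldots'', ``careful bookkeeping should then whittle the support down to two points''). The transposition step is exactly where the difficulty of the conjecture is concentrated: under the identifications you set up, \emph{both} generating subgroups are diagonal copies (of $S_{N_{k-1}}$ repeated across the $p$ layers, and of $S_k$ repeated across the $N_k/k$ consecutive blocks), so every generator, and every obvious word in the generators, has large and highly repetitive support; you give no mechanism that actually breaks this repetitiveness down to a two-point support, and your commutator heuristic is not accompanied by even one worked instance (say $k=8$, $p=2$). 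The primitivity claim likewise needs a real argument, since the standard block-system analysis is complicated by the fact that elements of the diagonal $S_{N_{k-1}}$ never move points of one layer without simultaneously moving the corresponding points of every other layer. Until these two steps are actually carried out, the induction does not close, and the statement remains what the paper says it is: a conjecture.
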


Using GAP or direct computation, it is easy to check the isomorphism 
$$
CT_{(3)} = \langle CT_2, CT_3\rangle\cong S_5.
$$
We can give the following explanation of the fundamental difference between the case $k=3$ and the case $k>3$.  Obviously, if we consider a subgroup in the group $S_n$ consisting of all substitutions leaving some symbol in place, then this subgroup will be isomorphic to $S_{n-1}$. Such an embedding of $S_{n-1}$ into $S_n$ is called a standard embedding. When $n=6$, there exists also a non-standard embedding $S_5$ into $S_6$, as demonstrated by the group $CT_{(3)}$. Indeed, the group $CT_{(3)}$ is embedded into $S_6$ and this embedding is given by a mapping defined on the generators:
$$
\tau_{0(2),1(2)}\mapsto a = (0,1) (2, 3) (4, 5),~~ \tau_{0(3),1(3)} \mapsto b = (0, 1) (3, 4),~~ \tau_{0(3),2(3)} \mapsto c = (0, 2)(3, 5),
$$
$$
\tau_{1(3),2(3)} \mapsto d = (1, 2)(4, 5).
$$
Obviously, the group $\langle a, b, c, d \rangle$ is a subgroup of  $S_6$ and it  does not fix any symbol from the set $\{ 0, 1, 2, 3, 4, 5 \}$, and therefore gives a non-standard embedding of  $S_5$ into $S_6$.

The following questions also seem interesting.

\begin{question}
Is the group $CT_{\infty}$ have finite width with respect to the set of horizontal class transpositions? In other words, is there a natural number $n$ such that every permutation from $CT_{\infty}$ can be represented as a product of at most $n$ horizontal class transpositions?
\end{question}

\begin{question}
What can we say on  the subgroups which are  generated by three class transpositions?  The same question is asked when two from these class transpositions are commute. 
\end{question}

\bigskip

\section{Acknowledgement}
The work of V.G. Bardakov and A. L. Iskra was performed at the Saint Petersburg Leonhard Euler
International Mathematical Institute and supported by the Ministry of Science and Higher
Education of the Russian Federation (agreement no. 075–15–2022–287).

\bigskip



\begin{thebibliography}{99}

\bibitem{K} S. Kohl, \textit{A simple group generated by involutions interchanging residue classes of the integers}, Math. Z.,  264, no. 4 (2010),  927--938.

\bibitem{K1} S. Kohl, \textit{The Collatz conjecture in a group theoretic context}, J. Group Theory, 20, no. 5 (2017), 1025--1030.

\bibitem{K2} S. Kohl, \textit{On the cycle structure of products of two class transpositions}, Preprint,

\bibitem{Kour} \textit{Kourovka notebook: Unresolved questions of group theory,} Novosibirsk, 2022.

\bibitem{LS} R. Lindon, P. Schupp, \textit{Combinatorial group theory}, M.: Mir, 1980.


\end{thebibliography}
\end{document}